\documentclass[11pt]{article}

\usepackage[a4paper, left=3.5cm,top=3.2cm,right=3.5cm,bottom=3.7cm]{geometry}
\setlength{\parskip}{0.5em} \setlength{\parindent}{0em}

\usepackage{graphicx}
\usepackage{mathtools}
\usepackage{psfrag}
\usepackage{amssymb}
\usepackage{amsmath}
\usepackage{authblk}
\usepackage{siunitx}
\usepackage{bm}

\usepackage{amsthm}
\usepackage{hyperref}

\usepackage{todonotes}

\usepackage{algorithm}
\usepackage{algorithmic}
\usepackage{subfig}

\usepackage{xspace}

\usepackage{enumitem}
\setitemize{label=\scriptsize{$\blacksquare$},topsep=0em}
\setenumerate{label=(\alph*), topsep=0em}

\newtheorem{theorem}{Theorem}
\newtheorem{lemma}[theorem]{Lemma}
\newtheorem{proposition}[theorem]{Proposition}
\newtheorem{corollary}[theorem]{Corollary}

\theoremstyle{definition}

\newtheorem{definition}[theorem]{Definition}

\newcommand{\R}{\mathbb R}

\newcommand{\N}{\mathbb N}

\newcommand{\edot}{\,\cdot\,}

\def\plus{{\boldsymbol{\texttt{+}}}}

\DeclareMathOperator*{\argmin}{arg\,min}

\newcommand{\al}{\alpha}
\newcommand{\la}{\lambda}

\newcommand{\ran}{\operatorname{ran}}
\newcommand{\dom}{\operatorname{dom}}

\newcommand\abs[1]{\left\vert#1\right\vert}
\newcommand\sabs[1]{{\lvert#1\rvert}}
\newcommand\norm[1]{{\left\Vert#1\right\Vert}}
\newcommand\snorm[1]{\Vert#1\Vert}

\newcommand\set[1]{{\left\{#1\right\}}}
\newcommand{\kl}[1]{\left(#1\right)}

\newcommand{\skl}[1]{(#1)}
\newcommand{\sset}[1]{\{#1\}}

\newcommand\rest[2]{{#1}\vert_{#2}}

\makeatletter
\newcommand*\bigcdot{\mathpalette\bigcdot@{.6}}
\newcommand*\bigcdot@[2]{\mathbin{\vcenter{\hbox{\scalebox{#2}{$\m@th#1\bullet$}}}}}
\makeatother

\newcommand{\signal}{x}
\newcommand{\zsignal}{z}

\newcommand{\data}{y}
\newcommand{\noise}{\xi}

\newcommand{\XX}{X}
\newcommand{\YY}{Y}
\newcommand{\ZZ}{Z}

\newcommand{\BP}{\mathbf{B}}

\newcommand{\Lo}{\mathbf{L}}

\newcommand{\nlo}{\sigma}              
\newcommand{\Po}{\mathbf  P}                         
\newcommand{\Qo}{\mathbf  Q}                         
\newcommand{\Ao}{\mathbf  A}                         
\newcommand{\Bo}{\mathbf  B}                         
\newcommand{\Ro}{\mathbf R}    

\newcommand{\M}{\mathcal M}                         
\newcommand{\Wo}{\mathbf W}                         
\newcommand{\Wset}{\mathcal{W}}                         
\newcommand{\Nset}{\mathcal{N}}                         
\newcommand{\Id}{\operatorname{Id}}                         

\newcommand{\tik}{\mathcal{T}}     

\newcommand{\NN}{\mathbf{N}}  
\newcommand{\nun}{{\bm \Phi}}  
\newcommand{\nsn}{\mathbf{L}}  


\usepackage{soul}
\usepackage{xcolor}
\colorlet{lred}{red!40}
\colorlet{lgreen}{green!40}
\colorlet{lblue}{blue!40}

\numberwithin{equation}{section}
\numberwithin{figure}{section}
\numberwithin{theorem}{section}

\allowdisplaybreaks

\title{Deep Null Space Learning for  Inverse Problems:
Convergence Analysis and Rates}

\author{Johannes~Schwab}
\author{Stephan~Antholzer}
\author{Markus~Haltmeier$^\star$}

\affil{Department of Mathematics, University of Innsbruck\authorcr
Technikerstrasse 13, 6020 Innsbruck, Austria\authorcr
\vspace{1em}
$^\star$Correspondence: {\tt markus.haltmeier@uibk.ac.at}}

\date{July 22, 2018}

\begin{document}

\maketitle

\begin{abstract}
Recently, deep learning based methods appeared
 as a new paradigm for  solving  inverse problems.
 These methods empirically show excellent
 performance but lack of theoretical justification; in particular, no
  results on the regularization properties  are available.
  In particular, this is  the  case for  two-step deep learning approaches,
   where a classical reconstruction method is applied to the data
   in a first step and  a trained deep
   neural network   is applied  to improve results
   in a second step.
 In this paper, we close the gap between  practice and theory 
 for a new  network structure in a two-step approach.
 For that purpose, we propose so called null space networks and   
 introduce the concept of $\M$-regularization.  Combined  with a 
 standard regularization method as reconstruction layer, the  proposed deep null space learning  
 approach is shown to be a $\M$-regularization method; convergence rates are also 
 derived.
The proposed null space network structure naturally 
preserves  data consistency  which is  considered as key property of
neural networks for solving inverse problems.

\medskip \noindent \textbf{Keywords:} inverse problems, null space networks, deep learning,  $\M$-regularization, convergence analysis, convolutional neural networks,  convergences rates

\medskip \noindent \textbf{AMS subject classifications:}
65J20, 65J22, 45F05
\end{abstract}

\section{Introduction}
\label{sec:intro}

We study the solution of inverse problems of the form
\begin{equation}\label{eq:ip}
	\text{Estimate $ \signal \in \XX $ from data } \quad
	\data^\delta = \Ao \signal   + \noise    \,.
\end{equation}
Here $\Ao \colon  \XX \to \YY$  is a  linear  operator
between Hilbert  spaces $\XX$ and $\YY$, and
$\noise  \in \YY$ models the unknown data error (noise),
which is assumed to satisfy the estimate   $\snorm{\noise } \leq \delta $
for some  noise level $\delta  \geq 0$.
We thereby allow a possibly  infinite-dimensional function space  setting,
but clearly the approach and results apply to a finite dimensional setting as well.

We focus on the ill-posed
(or ill-conditioned) case where, without additional  information,
the solution of \eqref{eq:ip} is either highly unstable, highly undetermined, or both.
Many inverse problems in biomedical imaging,
geophysics, engineering sciences, or elsewhere  can be written in such a form (see, for example,  \cite{engl1996regularization,scherzer2009variational}).
For its stable solution one has to employ regularization methods,  which are based on
approximating~\eqref{eq:ip} by neighboring well-posed problems, which enforce stability, accuracy,
 and  uniqueness.

\subsection{Regularization methods}

Any method  for the stable solution of \eqref{eq:ip} uses, either implicitly or explicitly,
a-priori information about  the unknowns to be  recovered. Such
information can be that  $\signal$ belongs to a certain set
of admissible elements $\M$  or that it  has small value of some
regularizing functional.
 The most  basic  regularization method is probably
 Tikhonov regularization, where the solution is defined as a minimizer
 of the quadratic  Tikhonov functional
   \begin{equation}\label{eq:tik}
 \tik_{\alpha;\data_\delta}(\signal)\coloneqq
 \frac{1}{2} \norm{\Ao (\signal) - \data_\delta}^2
 + \frac{\alpha}{2} \norm{ \signal }^2
 \end{equation}
Other classical regularization methods for solving linear inverse
problems are filter based methods \cite{engl1996regularization},
which include Tikhonov regularization as special case.

In the last couple of years  variational  regularization methods
 including   TV regularization or  $\ell^q$ regularization
 became  popular  \cite{scherzer2009variational}.
 They also include classical Tikhonov regularization as special case. 
 In the general version, the regularizer $\frac{1}{2} \norm{ \edot }^2 $    is replaced by
  general convex and lower semi-continuous functionals.

In this paper, we develop a new regularization concept that we name
$\M$-regularization method. Roughly spoken, an $\M$-regularization method
is a tuple $((\Ro_\al)_{\al >0},  \al^\star)$ where (for  a precise definition see  Definition~\ref{def:Mreg})
\begin{itemize}
\item  $\M \subseteq \XX$ is the set of admissible elements;
\item  $\Ro_\al \colon  \YY \to \XX$ are  continuous mappings;
\item  $ \alpha^\star = \alpha^\star(\delta, \data^\delta)$ is a suitable parameter choice;
\item  For any $ \signal \in \M$ we have $\Ro_{\alpha^\star(\delta, \data^\delta)}
(\data^\delta)  \to \signal$  as $\delta \to 0$.
\end{itemize}
Note that for some cases is might be reasonable  to take $\Ro_\al$  multivalued.
For the sake of  simplicity here we only consider the single-valued case.
Classical regularization methods are special cases of $\M$-regularization methods
in Hilbert spaces  where
$\M = \ker(\Ao)^\bot$. A typical regularization method is in this case
given by  Tikhonov regularization, where  $ \Ro_\al = (\Ao^* \Ao + \al \Id_{\XX} )^{-1}
 \Ao^*$.

\subsection{Solving inverse problems by neural networks}

Very recently,  deep learning  approaches  appeared  as alternative,
 very successful methods for solving inverse problems
(see, for example, \cite{adler2017solving,antholzer2017deep,benning2018modern,chang2017one,han2016deep,kobler2017variational,gupta2018cnn,chen2017lowdose,jin2017deep,wang2016accelerating,wang2016perspective,wurfl2016deep,zhang2016image}).
In  most of these approaches, a reconstruction network $\Ro \colon \YY \to \XX$  is trained  to map  measured data to the desired output image.

Various reconstruction networks  have been introduced in the literature.
In the two-step approach, the reconstruction networks take the form 
$\Ro  = \nsn \circ \BP $
where  $\BP \colon \YY \to \XX$
maps the data to the reconstruction space (reconstruction layer or backprojection; no free parameters)
and $\nsn \colon \XX \to \XX$ is a neural network (NN)
whose free parameters are  adjusted to the training data.  In particular, so
called residual networks $\nsn  = \Id_{\XX}  +  \NN$ 
where only the residual part $\NN$ is trained  \cite{he2016deep} showed very accurate  results for solving inverse problems (\cite{antholzer2017deep,chen2017low,jin2017deep,kang2017deep,lee2017deep,majumdar2015real,rivenson2017deep,wang2016accelerating}).
Here and in the following
$\Id_{\XX}$ denotes the identity on $\XX$.
Another class of reconstruction networks learns free parameters
in iterative schemes.
In  such approaches, a sequence of reconstruction networks
$\Ro = \Ro^{(k)}$  is defined by some iterative process
$ \Ro^{(k)} (\data) = \NN_k  (\Phi_k(\signal_{k-1}  ,  \dots, \signal_{0},    \data) )$
where  $\signal_0 $  is some the initial guess,
$\NN_k    \colon \XX \to \XX$ are networks that can be
adjusted to available  training  data,  and  $\Phi_k$  are updates   based on
the data and the previous iterates \cite{adler2017solving,kelly2017deep,kobler2017variational,schlemper2018deep}.

Further existing deep learning approaches for solving inverse  problems 
are based on trained projection operators \cite{chang2017one,gupta2018cnn},
or use  neural networks as trained regularization term \cite{li2018nett}.

While the above deep learning based reconstruction networks
empirically yield  good performance,
none of them is known to be a convergent regularization method.
In this paper we  introduce a new network structure (null space network)
that, when combined with a classical regularization of
the Moore Penrose inverse is shown to provide a convergent
$\M$-regularization method with rates.

\subsection{Proposed null space networks and main results}

As often  argued in the recent literature, deep learning based
reconstruction approaches (especially using  two-stage networks)
lack data consistency, in the sense that outputs of existing reconstruction
networks fail to accurately predict the given data.
In order to overcome this issue, in this
paper, we introduce a new network, that we name  null space network.
The  propose null space network takes the form (see Definition~\ref{def:nsn})
\begin{equation}\label{eq:nsn}
\nsn  = \Id_{\XX} + (\Id_{\XX} -\Ao^{\plus} \Ao ) \NN
\quad \text{ for a network function   $\NN \colon \XX \to \XX$} \,.
\end{equation}
Note that  $\Id_{\XX} -\Ao^{\plus} \Ao = \Po_{\ker(\Ao)}$
equals the  projector onto the null space $\ker(\Ao)$ of $\Ao$.
Consequently, the null space network $\nsn$ satisfies the
property $\Ao \nsn \signal = \Ao \signal$ for all $\signal \in \XX$. This 
yields  that data consistency, which means that
$\Ao \signal = \data$ is invariant among application of a
null space network (compare Figure~\ref{fig:null}).

Suppose $\signal_1, \dots, \signal_N$ are some desired output images
and let $\nsn $ be a trained null space network that approximately maps
$\Ao^{\plus} \Ao \signal_n$  to $\signal_n$.
(See Subsection~\ref{ssec:traing} for a possible training strategy.)
In this paper, we show that
if $(\Bo_\al)_{\al >0}$  is any  classical $\ker(\Ao)^\bot$-regularization,
then   the two-stage  reconstruction network
\begin{equation} \label{eq:two}
\Ro_\al \coloneqq \nsn \circ \Bo_\al \quad \text{ for } \al >0
\end{equation}
yields a  $\M$-regularization  with  $\M \coloneqq  \nsn  \ran(\Ao^{\plus})$.
To the best of our knowledge, these are first  results for regularization by neural networks.
Additionally we will   derive convergence rates  for  $(\Ro_\al)_{\al>0} $
on suitable function classes.

\begin{center}
\begin{figure}[h]
\begin{tikzpicture}
\draw[line width=1.3pt,->] (0,0)--(7,-3);
\node at (8.2,-3.4){$\ran(\Ao^{\plus}) = \ker(\Ao)^\bot$};
\draw[line width=1.3pt,->] (3,-2)--(4.5,1.5);
\node at (4.5,1.8){$\ker(\Ao)$};
\draw [red, line width=1.3pt] plot [smooth] coordinates {(0.5,7/6) (3,1.4) (4,0.5) (5,1) (7,0.7) (8,1)};
\node at (0.5,8.3/6)[above] {$\M \coloneqq \nsn(\ran(\Ao^{\plus}))$};
\filldraw (5,-15/7)circle(1.7pt);
\node at (4.2,-31/14)[below]{$\zsignal_i \coloneqq \Ao^{\plus} \Ao \signal_i$};
\draw [dashed, line width=1.3pt,->] (5,-15/7)--(5+2.9*3/7,-15/7+2.9*1);
\filldraw[color=red] (5+2.95*3/7,-15/7+2.95*1)circle(1.7pt);
\node at (5+10/7,-15/7+3.1*1)[above]{$\nsn(\zsignal_i) = \signal_i$};
\node at (-3,0){};
\end{tikzpicture}
\caption{\label{fig:null} Sketch of the action of a null space network 
$\nsn$  that maps points  $\zsignal_i \in \ran(\Ao^{\plus})$ to 
more desirable elements in  $ \zsignal_i  + \ker(\Ao)$ along the null 
space of $\Ao$.}
\end{figure}
\end{center}

\subsection{Outline}

This paper is organized as follows. In Section~\ref{sec:Mregularization}
we develop a general theory of $\M$-regularization and introduce the notion of
$\M$-generalized inverse (Definition \ref{def:Mgen})  and
$\M$-regularization  methods (Definition \ref{def:Mreg}) generalizing the classical Moore-Penrose generalized
inverse and  regularization concept. We show convergence (see Theorem \ref{thm:conv})
and derive convergence rates (Theorem \ref{thm:rates}) that include regularization via the
null space networks as a special case.
In Section\ref{sec:deep} we introduce the null-space networks, describe  possible
training and extend the  convergence results in the special case
of the null space network (Theorems  \ref{thm:convQ} and  \ref{thm:ratesQ}).
The paper concludes with an outlook presented in Section~\ref{sec:conclusion}.

\section{A theory of $\M$-regularization}
\label{sec:Mregularization}

In this section, we introduce the novel concepts
of $\M$-generalized inverse and $\M$-regularization.
We derive a general class of $\M$-regularization
for which we show convergence and derive
convergence rates.

Throughout this section,  let  $\Ao \colon \XX \to \YY$ be a linear bounded
operator and
$\nun \colon \XX \to  \ker(\Ao) \subseteq \XX$ be Lipschitz  continuous and define
\begin{equation}\label{eq:M}
\M \coloneqq (\Id_\XX +  \nun ) \ran(\Ao^{\plus}) \,.
\end{equation}
The prime example  is
$\nun  = \Po_{\ker(\Ao)} \circ \NN$ being a null space network
with a neural network function $\NN \colon \XX \to \XX$.
This case will be studied in the following section.
The results presented in this section
apply to general Lipschitz  continuous functions $\nun$
whose image is contained in $\ker(\Ao)$.

\subsection{$\M$-regularization methods}

In the following we  denote by $\Ao^{\plus} \colon \dom(\Ao^{\plus}) \subseteq \YY \to  \XX $ the  Moore-Penrose generalized inverse of  $\Ao$, defined  by
 $ \dom(\Ao^{\plus}) \coloneqq \ran\skl{\Ao} \oplus \ran\skl{\Ao}^\bot$ and
\begin{equation}\label{eq:moore}
 \Ao^{\plus} (\data)  \coloneqq \argmin \set{ \snorm{\signal}^2
\mid  \signal \in \XX \wedge \Ao^*\Ao \signal =\Ao^* \data}
\end{equation}
It is well known \cite{engl1996regularization}
that  $\set{\signal \in \XX \mid \Ao^*\Ao \signal =\Ao^* \data} \neq \emptyset$ if
and only  if $ \data \in \ran\skl{\Ao} \oplus \ran\skl{\Ao}^\bot$.
In particular,  $\Ao^{\plus} \data$ is well defined, and can be found as the
unique minimal norm solution of the normal equation $\Ao^*\Ao \signal =\Ao^* \data$

Classical regularization methods aim for  approximating  $\Ao^{\plus} \data$.
In contrast, the null space network will recover  different solutions of the
normal equation. For that purpose we introduce the following
concept.

\begin{definition}[$\M$-generalized inverse]\label{def:Mgen}
We call
$\Ao^\M \colon  \dom(\Ao^{\plus}) \subseteq \YY \to  \XX $ the
$\M$-generalized inverse of $\Ao$ if
\begin{equation} \label{eq:PhiInv}
\forall \data  \in \dom(\Ao^{\plus}) \colon \quad
\Ao^\M \data
= (\Id_{\XX} + \nun ) (\Ao^{\plus} \data) \,.
\end{equation}
\end{definition}

Recall  that  for any $\data \in \dom(\Ao^{\plus})$, the solution set of the
normal equation $\Ao^* \Ao \signal  = \Ao^* \data$ is given by
$\Ao^{\plus} \data + \ker (\Ao)$.
Hence $\Ao^\M \data$ gives a particular   solution of the  normal
equation, that can  be adapted  to a training set.
The $\M$-generalized inverse coincides  with the  Moore-Penrose
generalized inverse   if and   only if $\nun (\signal) =0$ for all
$\signal \in \XX$ in which case $\M = \ker(\Ao)^\bot = \ran(\Ao^{\plus})$.

\begin{lemma}\label{lem:stetig}
The $\M$-generalized inverse is  continuous
if and only if $\ran(\Ao)$ is closed.
\end{lemma}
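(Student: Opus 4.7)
The plan is to use the classical fact (see e.g.~\cite{engl1996regularization}) that $\Ao^{\plus}$ is continuous if and only if $\ran(\Ao)$ is closed, and reduce the statement for $\Ao^\M$ to this classical result via the decomposition $\XX = \ker(\Ao) \oplus \ker(\Ao)^\bot$.

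For the easy direction, I would assume $\ran(\Ao)$ is closed, so $\Ao^{\plus}\colon \dom(\Ao^{\plus}) \to \XX$ is continuous (in fact bounded linear). Since $\nun$ is Lipschitz continuous by assumption, the map $\Id_{\XX} + \nun$ is continuous, and hence so is the composition $\Ao^\M = (\Id_{\XX} + \nun)\circ \Ao^{\plus}$.

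For the converse, the key observation is that $\nun$ takes values in $\ker(\Ao)$ while $\Ao^{\plus}$ takes values in $\ker(\Ao)^\bot = \ran(\Ao^{\plus})$. Denoting by $\Po_{\ker(\Ao)^\bot}$ the orthogonal projection onto $\ker(\Ao)^\bot$, I have $\Po_{\ker(\Ao)^\bot}\, \nun = 0$ and $\Po_{\ker(\Ao)^\bot}\, \Ao^{\plus} = \Ao^{\plus}$, so that
\begin{equation*}
\Po_{\ker(\Ao)^\bot} \circ \Ao^\M
= \Po_{\ker(\Ao)^\bot}\circ (\Id_{\XX}+\nun)\circ \Ao^{\plus}
= \Ao^{\plus}\,.
\end{equation*}
Since $\Po_{\ker(\Ao)^\bot}$ is a bounded linear operator (hence continuous), continuity of $\Ao^\M$ transfers to continuity of $\Ao^{\plus}$, from which the classical result yields that $\ran(\Ao)$ is closed.

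I do not anticipate a serious obstacle; the only subtlety is recognising that the Lipschitz assumption on $\nun$ together with $\ran(\nun)\subseteq \ker(\Ao)$ is exactly what is needed both to transport continuity through the composition in the ``if'' direction and, via the orthogonal projection, to recover $\Ao^{\plus}$ from $\Ao^\M$ in the ``only if'' direction. The rest is the standard equivalence between closedness of $\ran(\Ao)$ and boundedness of $\Ao^{\plus}$, which may be quoted from~\cite{engl1996regularization}.
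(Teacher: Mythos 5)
Your proof is correct and follows essentially the same route as the paper: the forward direction composes the bounded $\Ao^{\plus}$ with the Lipschitz map $\Id_{\XX}+\nun$, and the converse uses exactly the identity $\Po_{\ker(\Ao)^\bot}\circ\Ao^\M=\Ao^{\plus}$ (the paper writes this projector as $\Po_{\ran(\Ao^{\plus})}$) to recover continuity of $\Ao^{\plus}$ and invoke the classical characterisation of closed range. Your version merely spells out the cancellation $\Po_{\ker(\Ao)^\bot}\nun=0$ more explicitly.
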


\begin{proof}
If $\ran(\Ao)$ is closed, then classical results show that $\Ao^{\plus}$ is bounded
(see for example \cite{engl1996regularization}).
Consequently,
$(\Id_{\XX} + \nun )  \circ \Ao^{\plus} $ is bounded too. Conversely, if $\Ao^\M$ is
continuous, then  the identity  $\Po_{\ran(\Ao^{\plus})} \Ao^\M =   \Ao^{\plus}$ implies that 
the Moore-Penrose  generalized inverse   $\Ao^{\plus}$ is bounded 
and  therefore that  $\ran(\Ao)$ is closed.
\end{proof}

Lemma \ref{lem:stetig} shows that as in the case of
the classical  Moore-Penrose generalized inverse,
the $\M$-generalized inverse is discontinuous in the case that
$\ran (\Ao)$ is not closed.    In order to stably solve the equation
$\Ao \signal = \data$   we therefore  require bounded approximations
of the $\M$-generalized inverse. For that purpose, we introduce the
following concept of regularization methods adapted to $\Ao^{\plus}$.

\begin{definition}[$\M$-regularization\label{def:Mreg} method]
Let  $\skl{\Ro_\al}_{\al >0}$ be a
family  of continuous (not necessarily  linear) mappings
$\Ro_\al  \colon \YY \to \XX$ and let $\al^\star \colon \skl{0, \infty} \times \YY
\to \skl{0, \infty}$.
We call the pair  $(\skl{\Ro_\al}_{\al >0}, \alpha^\star)$
a  $\M$-regularization method for the equation $\Ao \signal = \data$
if the following hold:
\begin{itemize}
\item
$ \forall \data \in \YY \colon \lim_{\delta\to 0} \sup \set{ \al^\star\skl{\delta, y^\delta}
\mid
y^\delta \in \YY \wedge \snorm{y^\delta- y} \leq \delta }
 =0$.

 \item
$ \forall \data \in \YY \colon
 \lim_{\delta\to 0} \sup \set{\snorm{ \Ao^\M y -  \Ro_{\al^\star\skl{\delta, y^\delta}} y^\delta} \mid y^\delta \in \YY \wedge \snorm{y^\delta- y} \leq \delta } =0$.
\end{itemize}
In the case that  $(\skl{\Ro_\al}_{\al >0}, \alpha^\star)$ is a $\M$-regularization method for  $\Ao \signal = \data$, then
we call the family $\skl{\Ro_\al}_{\al >0}$ a regularization of $\Ao^\M$
and $\al^\star$ an admissible parameter choice.
\end{definition}

In our generalized notation, a classical regularization method 
for  the equation $\Ao \signal = \data$ corresponds to a   
$\ran(\Ao^{\plus})$-regularization method  for  $\Ao \signal = \data$

\subsection{Convergence analysis}

The following theorem shows that the combination
of a null space network   and a regularization
method  of $\Ao^{\plus}$ yields  a regularization
of $\Ao^\M$.

\begin{theorem}\label{thm:conv}
Suppose $((\Bo_\al)_{\al >0}, \al^\star)$ is any
classical
regularization method for  $\Ao \signal = \data$.
Then, the pair $((\Ro_\al)_{\al >0}, \al^\star)$ with
$\Ro_\al \coloneqq (\Id_{\XX} + \nun )  \circ \Bo_\al$
is  a $\M$-regularization method  for $\Ao \signal = \data$.
In particular, the  family  $(\Ro_\al)_{\al >0}$ is a  regularization
of $\Ao^\M$.
\end{theorem}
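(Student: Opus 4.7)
The plan is to directly check the three requirements in Definition~\ref{def:Mreg} for the pair $((\Ro_\al)_{\al>0}, \al^\star)$, exploiting that the classical hypothesis already supplies almost everything and that $\Id_\XX + \nun$ is a Lipschitz map.

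First I would verify continuity of $\Ro_\al$ for each fixed $\al > 0$. Since $\nun$ is Lipschitz continuous by the standing assumption at the start of Section~\ref{sec:Mregularization}, $\Id_\XX + \nun$ is Lipschitz with constant at most $1 + L$, where $L$ denotes the Lipschitz constant of $\nun$. By hypothesis, $\Bo_\al \colon \YY \to \XX$ is continuous, hence the composition $\Ro_\al = (\Id_\XX + \nun) \circ \Bo_\al$ is continuous. The parameter-choice condition $\lim_{\delta \to 0}\sup\set{\al^\star(\delta, y^\delta) \mid \snorm{y^\delta - y} \leq \delta} = 0$ is exactly the first bullet in Definition~\ref{def:Mreg} applied to the classical regularization $((\Bo_\al)_{\al>0}, \al^\star)$, and thus requires no further argument.

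The remaining point is convergence of the reconstructions. For $y \in \dom(\Ao^{\plus})$ and any $y^\delta \in \YY$ with $\snorm{y^\delta - y} \leq \delta$, I would use $\Ao^\M y = (\Id_\XX + \nun)(\Ao^{\plus} y)$ from Definition~\ref{def:Mgen} together with the Lipschitz estimate for $\Id_\XX + \nun$ to get
\begin{equation*}
\snorm{\Ao^\M y - \Ro_{\al^\star(\delta, y^\delta)} y^\delta}
= \snorm{(\Id_\XX + \nun)(\Ao^{\plus} y) - (\Id_\XX + \nun)(\Bo_{\al^\star(\delta, y^\delta)} y^\delta)}
\leq (1+L)\, \snorm{\Ao^{\plus} y - \Bo_{\al^\star(\delta, y^\delta)} y^\delta}.
\end{equation*}
Taking the supremum over all admissible $y^\delta$ and sending $\delta \to 0$, the right-hand side tends to zero by the convergence property of the classical regularization $((\Bo_\al)_{\al>0}, \al^\star)$. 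This gives the second bullet of Definition~\ref{def:Mreg} and hence establishes that $((\Ro_\al)_{\al>0}, \al^\star)$ is an $\M$-regularization method; in particular $(\Ro_\al)_{\al>0}$ is a regularization of $\Ao^\M$.

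There is no real obstacle here: the Lipschitz continuity of $\nun$ transfers both continuity and convergence from the classical layer $\Bo_\al$ through the nonlinear layer $\Id_\XX + \nun$ without additional assumptions. The only minor point of care is to make sure the $\M$-generalized inverse is evaluated via its defining identity \eqref{eq:PhiInv} so that the difference $\Ao^\M y - \Ro_\al y^\delta$ collapses to an image of the single Lipschitz map $\Id_\XX + \nun$ applied to the two points $\Ao^{\plus} y$ and $\Bo_\al y^\delta$.
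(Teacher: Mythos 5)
Your proposal is correct and follows essentially the same route as the paper: both reduce the convergence requirement to the classical one via the Lipschitz estimate $\snorm{\Ao^\M y - \Ro_{\al^\star} y^\delta} \leq L\,\snorm{\Ao^{\plus} y - \Bo_{\al^\star} y^\delta}$ for a Lipschitz constant $L$ of $\Id_\XX + \nun$, and reuse the parameter-choice condition unchanged. The only (harmless) cosmetic difference is that you also spell out the continuity of each $\Ro_\al$ and write the constant as $1+L$ in terms of the Lipschitz constant of $\nun$ alone.
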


\begin{proof}
Because $((\Bo_\al)_{\al >0}, \al^\star)$ is a  
$\ran(\Ao^{\plus})$-regularization method,
it holds that $\lim_{\delta\to 0} \sup \sset{ \al^\star\skl{\delta, y^\delta}
\mid
y^\delta \in \YY \text{ und } \snorm{y^\delta- y} \leq \delta }
 =0$. Let $L$ be a Lipschitz
constant of $\Id_{\XX} + \nun$.
For any $ \data^\delta$ we have
 \begin{multline*}
 \norm{ \Ao^\M y -   \Ro_{\al^\star\skl{\delta, y^\delta}} y^\delta}
=
\norm{  (\Id_{\XX} + \nun ) \circ \Ao^{\plus} y -  (\Id_{\XX} + \nun )\circ \Bo_{\al^\star\skl{\delta, y^\delta}} y^\delta}
\\
\leq
L  \norm{\Ao^{\plus} y -   \Bo_{\al^\star\skl{\delta, y^\delta}} y^\delta} \,.
 \end{multline*}
 Consequently
 \begin{multline*}
 \sup \set{\snorm{ \Ao^\M y -  (\Id_{\XX} + \nun ) \Bo_{\al^\star\skl{\delta, y^\delta}} y^\delta} \mid y^\delta \in \YY \wedge \snorm{y^\delta- y} \leq \delta}
 \\ \leq
 L   \sup \set{\snorm{ \Ao^{\plus} y -    \Bo_{\al^\star\skl{\delta, y^\delta}} y^\delta} \mid y^\delta \in \YY \wedge \snorm{y^\delta- y}\leq \delta}
 \to 0 \,.
 \end{multline*}
 In particular, $( (\Id_{\XX} + \nun ) \circ \Bo_\al)_{\al >0}$ is  a regularization
of $\Ao^\M$.
\end{proof}

A wide class of $\M$-regularization methods can be defined by
a regularizing filter.

\begin{definition}
A family $\kl{g_\alpha}_{\al >0}$  of functions
$g_\al \colon [0,\norm{\Ao^*\Ao}]  \to \R $
 is called a regularizing   filter   if it satisfies
 \begin{itemize}
\item
For all $\al >0$,  $g_\al$ is piecewise continuous;
\item
 $\exists C  >0 \colon  \sup \set{ \sabs{\la g_\al \skl{\la}} \mid \al >0  \wedge
\la \in [0,\la_{\mathrm{max}}]} \leq C$.
\item
$\forall \la \in (0,\norm{\Ao^*\Ao}] \colon
\lim_{\al \to 0} g_\al \skl{\la} = 1/\la$.
\end{itemize}
\end{definition}

\begin{corollary}
Let  $\kl{g_\alpha}_{\al >0}$   be a regularizing   filter  and define
$\Bo_\al \coloneqq
g_{\al} \kl{ \Ao^*\Ao} \Ao^*$.
Then $( (\Id_{\XX} + \nun ) \circ  \Bo_\al)_{\al >0}$ is  a regularization
of $\Ao^\M$.
\end{corollary}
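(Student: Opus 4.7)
The plan is to reduce the statement to Theorem~\ref{thm:conv} by showing that $(\Bo_\al)_{\al >0}$, together with a suitable parameter choice $\al^\star$, is a classical regularization method for $\Ao \signal = \data$, i.e., a $\ran(\Ao^{\plus})$-regularization method in the sense of Definition~\ref{def:Mreg} with $\nun \equiv 0$. Once this is in place, Theorem~\ref{thm:conv} applied to the given Lipschitz continuous $\nun$ immediately yields that $((\Id_{\XX}+\nun)\circ \Bo_\al)_{\al >0}$ is a regularization of $\Ao^\M$, which is exactly the claim.

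The bulk of the work is then to verify the two standard properties of spectral filter regularization (see, e.g., \cite{engl1996regularization}). First, since $\la \mapsto \la g_\al(\la)$ is uniformly bounded by $C$ on $[0,\norm{\Ao^*\Ao}]$, the functional calculus for the self-adjoint operator $\Ao^*\Ao$ gives that each $\Bo_\al = g_\al(\Ao^*\Ao)\Ao^*$ is a bounded linear, hence continuous, operator from $\YY$ to $\XX$. Second, for fixed $\data \in \dom(\Ao^{\plus})$, the spectral theorem yields
\begin{equation*}
\norm{\Bo_\al \data - \Ao^{\plus} \data}^2
= \int_0^{\norm{\Ao^*\Ao}} \sabs{g_\al(\la) - 1/\la}^2 \, d\norm{E_\la \Ao^*\data}^2 ,
\end{equation*}
where $(E_\la)$ denotes the spectral family of $\Ao^*\Ao$. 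The pointwise convergence $g_\al(\la) \to 1/\la$ for $\la >0$ together with the uniform bound on $\la g_\al(\la)$ allows one to apply dominated convergence and deduce $\Bo_\al \data \to \Ao^{\plus} \data$ as $\al \to 0$.

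To complete the verification, I would choose $\al^\star = \al^\star(\delta, \data^\delta)$ (either a priori or by the discrepancy principle) such that $\al^\star \to 0$ and the propagated data error, bounded by $\delta \cdot \sup_{\la}\sabs{g_{\al^\star}(\la)}^{1/2}$, also tends to zero as $\delta \to 0$. Splitting $\Bo_{\al^\star}\data^\delta - \Ao^{\plus}\data$ into data-propagation and approximation parts, the two estimates above then yield both bullet points of Definition~\ref{def:Mreg} uniformly over $\data^\delta$ with $\snorm{\data^\delta - \data} \leq \delta$. The only real obstacle is essentially bookkeeping, namely matching the language of ``classical regularization method'' used in Theorem~\ref{thm:conv} to the filter construction; since the paper identifies the former with the $\nun \equiv 0$ case of Definition~\ref{def:Mreg}, this reduces to invoking the standard Hilbert-space spectral filter theory and then applying Theorem~\ref{thm:conv}.
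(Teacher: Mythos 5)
Your proposal is correct and follows the same route as the paper: reduce to Theorem~\ref{thm:conv} by noting that $(\Bo_\al)_{\al>0}$ is a classical regularization of $\Ao^{\plus}$. The only difference is that the paper simply cites \cite{engl1996regularization} for this classical fact, whereas you unpack the standard spectral-calculus argument (boundedness, dominated convergence, a priori parameter choice) explicitly.
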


\begin{proof}
The family $(\Bo_\al)_\al$ is a regularization of $\Ao^{\plus}$; see \cite{engl1996regularization}.
Therefore,  according to Theorem~\ref{thm:conv},
 $( (\Id_{\XX} + \nun ) \circ \Bo_\al)_{\al >0}$ is  a regularization  of $\Ao^\M$.
\end{proof}

Basic examples of  filter based regularization methods are
Tikhonov  regularization,  where $g_\al (\la) = 1/(\al+ \la)$,
 and  truncated singular value decomposition where
 \begin{equation*}
 g_\al \kl{ \la } \coloneqq
 \begin{cases}
 0 & \text{ if  } \la < \al \\
 1/\la & \text{ if  } \la \geq \al \,.
\end{cases}
\end{equation*}

Classical  regularization methods are based on approximating  the
Moore-Penrose inverse. In our notation, this corresponds to
a $\ran(\Ao^{\plus})$-regularization methods.
The following  result shows that  $\M$-regularization methods are
essentially continuous approximations of $\Ao^\M$.

\begin{proposition}\label{thm:punktweise}
Let $\skl{\Ro_\al}_{\al >0}$ be a family of continuous
mappings
$\Ro_\al \colon \YY \to \XX$.
\begin{enumerate}[label=(\alph*)]

\item \label{it:punkt1} If $\rest{\Ro_\al}{\dom \skl{\Ao^{\plus}}}
\to \Ao^\M$ pointwise  as $\al \to 0$, then the family $\skl{\Ro_\al}_{\al >0}$ is a  regularization of
$\Ao^\M$.

\item \label{it:punkt2}
Suppose that $\skl{\Ro_\al}_{\al >0}$ is a regularization  of  $\Ao^\M
$ and that there exists a parameter choice  $\al^\star$
that is continuous in the  first argument. Then
$\rest{\Ro_\al}{\dom \skl{\Ao^{\plus}}} \to \Ao^\M$  pointwise as  $\al \to 0$.
\end{enumerate}
\end{proposition}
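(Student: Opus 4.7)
My plan for part (a) is a diagonal construction that produces an a priori parameter choice from the pointwise convergence together with the continuity of each $\Ro_\al$. Fix $y \in \dom(\Ao^{\plus})$. Using pointwise convergence, I pick a strictly decreasing sequence $\al_k \searrow 0$ with $\norm{\Ro_{\al_k} y - \Ao^\M y} \leq 1/k$. Continuity of each $\Ro_{\al_k}$ at $y$ then delivers $\delta_k \searrow 0$ such that $\norm{y^\delta - y} \leq \delta_k$ forces $\norm{\Ro_{\al_k} y^\delta - \Ro_{\al_k} y} \leq 1/k$. Declaring $\al^\star(\delta, y^\delta) \coloneqq \al_k$ for $\delta \in (\delta_{k+1}, \delta_k]$ yields $\al^\star(\delta, y^\delta) \to 0$, and the triangle inequality
\begin{equation*}
\norm{\Ao^\M y - \Ro_{\al^\star(\delta, y^\delta)} y^\delta}
\leq \norm{\Ro_{\al_k} y^\delta - \Ro_{\al_k} y} + \norm{\Ro_{\al_k} y - \Ao^\M y} \leq 2/k,
\end{equation*}
valid whenever $\delta \in (\delta_{k+1}, \delta_k]$ and $\norm{y^\delta - y} \leq \delta$, verifies the second condition of Definition~\ref{def:Mreg}.

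For part (b), I fix $y \in \dom(\Ao^{\plus})$ and set $g(\delta) \coloneqq \al^\star(\delta, y)$; by assumption $g \colon (0, \infty) \to (0, \infty)$ is continuous. Substituting the noise-free datum $y^\delta = y$ in the two conditions of Definition~\ref{def:Mreg} produces $g(\delta) \to 0$ and $\Ro_{g(\delta)} y \to \Ao^\M y$ as $\delta \to 0$. Given $\eps > 0$, I choose $\delta_0 > 0$ so small that $\norm{\Ro_{g(\delta)} y - \Ao^\M y} < \eps$ for every $\delta \in (0, \delta_0]$. Since $g$ is continuous on $(0, \delta_0]$ with $g(\delta_1) \to 0$ as $\delta_1 \to 0$, the intermediate value theorem on $[\delta_1, \delta_0]$ for arbitrarily small $\delta_1 > 0$ shows that $(0, g(\delta_0)] \subseteq g((0, \delta_0])$. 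Consequently, for any $\al \in (0, g(\delta_0)]$ there exists $\delta \in (0, \delta_0]$ with $g(\delta) = \al$, whence $\norm{\Ro_\al y - \Ao^\M y} = \norm{\Ro_{g(\delta)} y - \Ao^\M y} < \eps$. This proves $\Ro_\al y \to \Ao^\M y$ as $\al \to 0$.

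The main obstacle I anticipate lies in part (a): Definition~\ref{def:Mreg} phrases the regularization conditions with a single parameter choice $\al^\star$ serving every true $y \in \YY$, whereas the diagonal construction above is naturally tailored to a fixed $y$ through the continuity moduli of the operators $\Ro_{\al_k}$. This mirrors the classical situation and is customarily handled by allowing $\al^\star$ to depend on $y^\delta$ (an a posteriori choice, e.g.\ via a discrepancy principle) or by exploiting uniform operator-norm estimates in filter-based settings, without altering the diagonal argument. In part (b), by contrast, the only delicate step is the IVT argument, which succeeds precisely because continuity is assumed; without it $g$ could skip over neighborhoods of the origin in $\al$-space and transfer of pointwise convergence would fail.
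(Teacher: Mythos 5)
Your proof is correct, but it takes a genuinely different route from the paper's. The paper reduces both parts to the classical $\ran(\Ao^{\plus})$-theory by composing with the orthogonal projection: from $\Ro_\al \to \Ao^\M$ pointwise it deduces $\Po_{\ran(\Ao^{\plus})}\circ\Ro_\al \to \Ao^{\plus}$ pointwise, invokes the classical equivalence between pointwise convergence and being a regularization of $\Ao^{\plus}$, and then lifts back via the factorization $\Ro_\al = (\Id_{\XX}+\nun)\circ\Po_{\ran(\Ao^{\plus})}\circ\Ro_\al$ together with Theorem~\ref{thm:conv} for part (a), and via the Lipschitz continuity of $\Id_{\XX}+\nun$ for part (b). You instead re-run the underlying classical arguments directly on $\Ao^\M$: your diagonal construction for (a) and your intermediate-value argument for (b) are precisely the Engl--Hanke--Neubauer proofs for $\Ao^{\plus}$, and they transfer verbatim because they use nothing about the limit map beyond its being a fixed function on $\dom(\Ao^{\plus})$. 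What your route buys is self-containedness and a degree of robustness: the paper's lifting step rests on the identity $\Ro_\al = (\Id_{\XX}+\nun)\circ\Po_{\ran(\Ao^{\plus})}\circ\Ro_\al$, which does not hold for an arbitrary family of continuous mappings and amounts to an implicit structural assumption on $\Ro_\al$, whereas your argument needs no such structure; what the paper's route buys is brevity and reuse of Theorem~\ref{thm:conv}. Your closing caveat about the parameter choice in (a) depending on the exact datum $y$ is apt but not specific to your proof: the classical result the paper invokes likewise delivers only a $y$-dependent rule, so under the literal reading of Definition~\ref{def:Mreg} both arguments tacitly adopt the standard convention that an admissible parameter choice may be selected per exact datum.
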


\begin{proof} \mbox{}
\ref{it:punkt1} If $\rest{\Ro_\al}{\dom \skl{\Ao^{\plus}}} \to \Ao^\M$
pointwise, then $\Po_{\ran(\Ao^{\plus})} \circ \rest{\Ro_\al}{\dom \skl{\Ao^{\plus}}} \to \Po_{\ran(\Ao^{\plus})} \circ   \Ao^\M = \Ao^{\plus}$ pointwise. Hence,  classical regularization theory
implies that   $\Po_{\ran(\Ao^{\plus})} \circ  \Ro_\al$ is a regularization  of $\Ao^{\plus}$.
We have $\Ro_\al =
(\Id_{\XX}+ \nun ) \circ \Po_{\ran(\Ao^{\plus})} \circ \Ro_\al$
and, according  to  Theorem~\ref{thm:conv},
the family $(\Ro_\al)_{\al >0}$ is a regularization of $\Ao^\M$.

\ref{it:punkt2}
We have
\begin{multline*}
\sup \set{\snorm{ \Po_{\ran(\Ao^{\plus})} ( \Ao^\M y -
 \Ro_{\al^\star\skl{\delta, y^\delta}} y^\delta) } \mid y^\delta \in \YY \wedge \snorm{y^\delta- y} \leq \delta }\\
\leq
\sup \set{\snorm{ \Ao^\M y -
\Ro_{\al^\star\skl{\delta, y^\delta}} y^\delta} \mid y^\delta \in \YY \wedge \snorm{y^\delta- y} \leq \delta } \to 0\,,
\end{multline*}
which shows that $(\Po_{\ran(\Ao^{\plus})} \circ \Ro_{\al})_{\al >0}$
is a regularization of $\Ao^{\plus} = \Po_{\ran(\Ao^{\plus})} \circ \Ao^\M$.
Together with   standard regularization theory this shows that
$\Po_{\ran(\Ao^{\plus})} \circ \rest{\Ro_\al}{\dom \skl{\Ao^{\plus}}} \to \Ao^{\plus}$
pointwise as  $\al \to 0$. Consequently, $\rest{\Ro_\al}{\dom \skl{\Ao^{\plus}}}
= (\Id_{\XX}+\nun) \circ \Po_{\ran(\Ao^{\plus})} \circ \rest{\Ro_\al}{\dom \skl{\Ao^{\plus}}}$
converges pointwise to $\Ao^\M = (\Id_{\XX}+\nun) \circ \Ao^{\plus}$.
\end{proof}

\subsection{Convergence rates}

Next we derive   quantitative error estimates.
For that purpose, we assume in the following  that
$\Bo_\al = g_{\al} \kl{ \Ao^*\Ao} \Ao^*$   is
 defined by the regularizing filter $\kl{g_\alpha}_{\al >0}$.
 We use the  notation
$\al^\star  \asymp ({\delta}/{\rho})^{a}$ as $\delta \to 0$
where $\al^\star \colon \YY  \times \skl{0, \infty} \to \skl{0, \infty}$  and 
$a, \rho >0$
to indicate there are positive constants
$d_1, d_2$  such that
$d_1 ({\delta}/{\rho})^{a} \leq \al^\star(\delta)
\leq d_2 ({\delta}/{\rho})^{a}$.

 \begin{theorem} \label{thm:rates}
Suppose $\mu ,\rho >0$ and let $\kl{g_\alpha}_{\al >0}$ be 
a regularizing filter  such that there exist constants $\al_0,c_1, c_2  >0$ 
with  
\begin{itemize}
\item
$\forall  \al >0  \;  \forall \la \in [0, \norm{\Ao^*\Ao}]\colon
\la^\mu \abs{1- \la g_\al \kl{\la}} \leq  c_1 \alpha ^\mu$;

\item
$\forall \al \in( 0, \al_0) \colon  \snorm{g_\al}_\infty \leq c_2/ \al$.
\end{itemize}
Consider the $\M$-regularization method  $\Ro_\al  \coloneqq
(\Id_{\XX} + \nun)   \circ g_\al(\Ao^*\Ao) \Ao^*$ and set
\begin{equation}
\M_{\mu, \rho, \nun} \coloneqq
(\Id_{\XX}+\nun) \kl{\Ao^*\Ao}^\mu \kl{ \overline{B _\rho (0)}}  \,.
\end{equation}
Moreover, let $\al^\star \colon  \skl{0, \infty} \times \YY \to \skl{0, \infty}$
be a parameter choice (possible depending on the source set
$\M_{\mu, \rho, \nun}$) that satisfies
$\al^\star  \asymp ({\delta}/{\rho})^{\frac{2}{2\mu+1}}$ as $\delta \to 0$.
Then there exists a constant $c>0$ such that
\begin{multline}\label{eq:rates}
 \sup \set{\snorm{\Ro_{\al^\star(\delta,y^\delta)} (y^\delta) - \signal } \mid
 \signal \in \M_{\mu, \rho, \nun} \wedge  \data^\delta \in \YY \wedge  \snorm{\Ao \signal - \data^\delta} \leq \delta } \\
\leq c \delta^{\frac{2\mu}{2\mu+1}}
 \rho^{\frac{1}{2\mu+1}} \,.
\end{multline}
In particular, for any
$\signal  \in \ran( (\Id_{\XX}+\nun) \circ (\Ao^*\Ao)^\mu)$
we have the convergence rate result
$\snorm{\Ro_{\al^\star(\delta,y^\delta)} (y^\delta) - \signal } =
\mathcal{O}(\delta^{\frac{2\mu}{2\mu+1}})$.
\end{theorem}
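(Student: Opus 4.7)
The plan is to reduce the rate statement to the classical filter rate for the Moore-Penrose component, transported through the Lipschitz map $\Id_\XX + \nun$.

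First I would unfold the source set: any $\signal \in \M_{\mu,\rho,\nun}$ has the form $\signal = (\Id_\XX+\nun)(\zsignal_0)$ with $\zsignal_0 = (\Ao^*\Ao)^\mu\omega$ and $\snorm{\omega}\leq \rho$. Two observations make the reduction possible. Since $(\Ao^*\Ao)^\mu$ has range in $\ker(\Ao)^\bot$ (standard spectral calculus), the element $\zsignal_0$ lies in $\ker(\Ao)^\bot$, so $\zsignal_0 = \Ao^{\plus}\Ao\zsignal_0$. Since $\nun$ takes values in $\ker(\Ao)$, we also have $\Ao\signal = \Ao\zsignal_0$, which transfers the noise bound $\snorm{\Ao\signal - y^\delta}\leq \delta$ to $\snorm{\Ao\zsignal_0 - y^\delta}\leq \delta$. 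Calling $L$ a Lipschitz constant of $\Id_\XX+\nun$, this yields
\begin{equation*}
\snorm{\Ro_{\al^\star(\delta, y^\delta)}(y^\delta) - \signal}
\leq L\,\snorm{\Bo_{\al^\star(\delta,y^\delta)}\,y^\delta - \zsignal_0},
\end{equation*}
so it suffices to prove the analogous rate for the classical filter estimator $\Bo_\al$ applied to the minimum-norm preimage $\zsignal_0$.

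Next I would carry out the standard source-condition estimate for $\Bo_\al$. Splitting
\begin{equation*}
\snorm{\Bo_\al y^\delta - \zsignal_0}
\leq \snorm{(g_\al(\Ao^*\Ao)\Ao^*\Ao - \Id_\XX)\zsignal_0}
+ \snorm{\Bo_\al}\,\snorm{\Ao\zsignal_0 - y^\delta},
\end{equation*}
the first filter assumption together with the functional calculus gives $\snorm{(g_\al(\Ao^*\Ao)\Ao^*\Ao - \Id_\XX)(\Ao^*\Ao)^\mu\omega}\leq c_1\al^\mu\snorm{\omega}\leq c_1\al^\mu\rho$, while the second assumption combined with the uniform bound $\sabs{\la g_\al(\la)}\leq C$ yields $\snorm{\Bo_\al}^2 \leq \sup_\la \la\,g_\al(\la)^2 \leq C c_2/\al$ for $\al\in(0,\al_0)$. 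Inserting $\al^\star \asymp (\delta/\rho)^{2/(2\mu+1)}$ balances the two contributions to give $\snorm{\Bo_{\al^\star}y^\delta - \zsignal_0}\leq c'\delta^{2\mu/(2\mu+1)}\rho^{1/(2\mu+1)}$ with constants independent of $\signal$, $\omega$ and $y^\delta$. The Lipschitz reduction then produces \eqref{eq:rates}. The final convergence statement is immediate: for any $\signal\in\ran((\Id_\XX+\nun)\circ(\Ao^*\Ao)^\mu)$ choose $\rho$ large enough that $\signal\in\M_{\mu,\rho,\nun}$, and apply the bound.

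The only real subtlety is the identification $\zsignal_0 = \Ao^{\plus}\Ao\signal$: the source-condition representation has to survive the null-space deformation so that the classical spectral estimates apply to $\zsignal_0$ without deterioration. Once this is in place, the remainder is the textbook filter-rate calculation wrapped inside a single Lipschitz inequality.
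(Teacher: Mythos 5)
Your proposal is correct and follows essentially the same route as the paper: identify the core element $\zsignal_0 = \Po_{\ran(\Ao^{\plus})}\signal$ satisfying the classical source condition, note that $\Ao\signal = \Ao\zsignal_0$ transfers the noise bound, and pull the classical filter rate through the Lipschitz map $\Id_{\XX}+\nun$. The only difference is that the paper simply cites order optimality of $g_\al(\Ao^*\Ao)\Ao^*$ on $(\Ao^*\Ao)^\mu(\overline{B_\rho(0)})$ from the standard literature, whereas you reprove it via the explicit approximation/noise splitting — both are fine.
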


\begin{proof}
We have $\Po_{\ran(\Ao^{\plus})} \M_{\mu, \rho, \nun}
=   \kl{\Ao^*\Ao}^\mu \kl{ \overline{B _\rho (0)}} $
and $\Po_{\ran(\Ao^{\plus})}\Ro_\al  = g_\al(\Ao^*\Ao) \Ao^*$.
Suppose $\signal  \in \M_{\mu, \rho, \nun}$ and
$\data^\delta \in \YY$ with $\snorm{\Ao \signal - \data^\delta} \leq \delta$.
Under the given assumptions, $g_\al(\Ao^*\Ao) \Ao^*$ is an order optimal
 regularization method on   $\kl{\Ao^*\Ao}^\mu \kl{ \overline{B _\rho (0)}} $,
 which  implies (see \cite{engl1996regularization})
 \begin{equation*}
 \snorm{g_{\al^\star(\delta,y^\delta)}(\Ao^*\Ao) \Ao^* (y^\delta)
	- \Po_{\ran(\Ao^{\plus})}  \signal }
	\leq  C \, \delta^{\frac{2\mu}{2\mu+1}}
\end{equation*}
for some  constant $C>0$ independent of 
$\signal$, $\data^\delta$.
Consequently, we have
 \begin{align*}
	\snorm{\Ro_{\al^\star(\delta,y^\delta)} (y^\delta) - \signal }
	&= \snorm{(\Id_{\XX} + \nun)   \circ g_{\al^\star(\delta,y^\delta)}(\Ao^*\Ao) \Ao^* (y^\delta) -
	(\Id_{\XX} + \nun) \Po_{\ran(\Ao^{\plus})}  \signal }
	\\
	&\leq L
	\snorm{g_{\al^\star(\delta,y^\delta)}(\Ao^*\Ao) \Ao^* (y^\delta)
	- \Po_{\ran(\Ao^{\plus})}  \signal }
	\\
	&\leq L C \delta^{\frac{2\mu}{2\mu+1}}
	\rho^{\frac{1}{2\mu+1}} \,,
 \end{align*}
where $L$ is the Lipschitz constant of $\Id_{\XX} + \nun$.
Taking the supremum  over all $\signal  \in \M_{\mu, \rho, \nun}$ and
$\data^\delta \in \YY$ with $\snorm{\Ao \signal - \data^\delta} \leq \delta$
yields  \eqref{eq:rates}.
\end{proof}

Note that the filters $\kl{g_\alpha}_{\al >0}$ of the truncated SVD and the Landweber iteration satisfy the assumptions of Theorem \ref{thm:rates}. In the case of Tikhonov regularization the assumptions are satisfied for $\mu \leq 1$.  In particular under the assumption (resembling the classical  source condition)
\begin{equation*}
\signal \in  (\Id_{\XX}+\nun) ( \ran(\Ao^{\plus}) )
\end{equation*}
we obtain the convergence rate
$\snorm{\Ro_{\al^\star(\delta,y^\delta)} (y^\delta) - \signal } =
\mathcal{O}(\delta^{1/2})$.

\section{Deep null space learning}
\label{sec:deep}

Throughout this section let $\Ao \colon \XX \to \YY$ be a linear bounded
operator.
In this case, we define $\M$-regularizations by
null-space  networks. We describe a possible training strategy
and  derive regularization  properties and rates.
For the following recall that the projector onto the  kernel
of $\Ao$ is given by $\Po_{\ker(\Ao)}  = \Id_{\XX} - \Ao^{\plus} \Ao $.

\subsection{Null  space networks}

For simplicity. we work with layered
feed forward networks, although more complicated networks can be applied as long as their Lipschitz constant is not too large. While their  notation is
standard in a finite-dimensional setting, no formal
definitions seems available  for general Hilbert spaces.
We introduce the following Hilbert space notion.

 \begin{definition}[Layered feed forward network]\label{def:net}
Let $\XX$ and $\ZZ$ be Hilbert spaces.
We call a function $\NN \colon \XX \to \ZZ$
defined by
\begin{equation} \label{eq:nn}
	\NN
	\coloneqq
	\nlo_{L}  \circ \Wo_L \circ \nlo_{L-1} \circ \Wo_{L-1} \circ  \cdots \circ \nlo_1 \circ \Wo_1 \,,
\end{equation}
a layered feed forward neural network function
of depth $L \in \N$ with activations $\nlo_1, \dots, \nlo_L$  if
\begin{enumerate}[label=(N\arabic*)]
\item $\XX_\ell$ are Hilbert spaces with $\XX_0=\XX$ and $\XX_L = \ZZ$;
\item $\Wo_\ell \colon  \XX_{\ell-1} \to \XX_\ell $ are affine, continuous;
\item $\nlo_\ell \colon \XX_\ell \to \XX_\ell $ are continuous.
\end{enumerate}
\end{definition}

Usually the nonlinearities $\nlo_\ell$ are fixed and the affine mappings $\Wo_\ell $
are trained.
In the case that $\XX_\ell$ is a function space, then a standard operation for
$\nlo_\ell$ is the ReLU (the rectified linear unit),  $\operatorname{ReLU} (x) \coloneqq
\max \set{x,0}$, that is applied component-wise, or ReLU in combination with max pooling which takes the maximum value
$\max\set{ \abs{x(i)} \colon i \in I_k}$  within clusters of transform coefficients.
The network in Definition \ref{def:net} may in particular be a convolutional neural
network (CNN);  see \cite{li2018nett} for a definition even in Banach spaces.
In a similar manner one could define  more general feed forward networks in Hilbert spaces,
 for example following the notion of \cite{shalev2014understanding} in the finite dimensional case.

We are now able to formally define the concept of a  null space network.

\begin{definition}\label{def:nsn}
A function $\nsn  \colon \XX \to \XX  $ is a null space network if it has the form
$\nsn = \Id_{\XX} +  (\Id_{\XX} - \Ao^{\plus} \Ao) \NN$ where $\NN \colon \XX \to \XX$ is a neural
network function as in \eqref{eq:nn}.
\end{definition}

\subsection{Network training}
\label{ssec:traing}

We train  the null space  network $\nsn = \Id_{\XX} +  (\Id_{\XX} - \Ao^{\plus} \Ao) \NN$
to (approximately) map  elements to the desired class of training phantoms.
For  that purpose, we fix the following:

\begin{itemize}
\item
$\M_N = \set{\signal_1, \dots, \signal_N}$  is a class of training  phantoms;
\item
For all $\ell$ fix the nonlinearity $\nlo_\ell \colon  \XX_\ell \to \XX_\ell$;
\item
$ \Wset_\ell $ are finite-dimensional spaces  of affine continuous mappings;

\item $\Nset $ is the set of all NN functions  of the form \eqref{eq:nn}
with $ \Wo_\ell \in  \Wset_\ell $.
\end{itemize}

We then consider null space network $\Id_{\XX} +   (\Id_{\XX} - \Ao^{\plus} \Ao)   \NN$
where $\NN \in \Nset $. To train the null space networks we
propose to minimize the regularized error functional
$E \colon \Nset  \to \R$ defined by
\begin{equation} \label{eq:err1}
    E (\NN ) \coloneqq \frac{1}{2} \sum_{\ell=1}^L \norm{ \signal_n - (\Id_{\XX} + (\Id_{\XX} - \Ao^{\plus} \Ao) \NN)  (\Ao^{\plus} \Ao  \signal_n) }^2
    + \mu \prod_{\ell=1}^L \norm{\Lo_\ell}
\end{equation}
where $\NN$ is  of the form \eqref{eq:nn} and $ \Lo_\ell $ is the linear part of $\Wo_\ell$  and
$\mu$ is a regularization parameter.

Network training aims at making $E (\NN )$  small, for example, by gradient descent.
Clearly $\prod_{\ell=1}^L \norm{\Lo_\ell}$  is  an upper bound on the Lipschitz constant
of  $ \NN $.  Therefore, the Lipschitz constant of  the finally trained network will stay reasonably small.
Note that it is not required that  \eqref{eq:err1} is exactly minimized. Any  trained network where
$\frac{1}{2} \sum_{\ell=1}^L \snorm{ \signal_n - (\Id_{\XX} +  \Ao^{\plus} \Ao \NN)  (\Ao^{\plus} \Ao  \signal_n) }^2$ is small
yields a null space network $\Id_{\XX} +  \Ao^{\plus} \Ao \NN$  that does, at least on the training set, a better job in  estimating
$\signal_n$ from $\Ao^{\plus} \Ao  \signal_n$ than the identity.

Alternatively, we may train a regularized null space network $\Id_{\XX} +   ( \Id_{\XX} - \Bo_\al \Ao)   \NN$
to map the  regularized data $\Bo_\al \Ao \signal_n$ (instead of $ \Ao^{\plus} \Ao  \signal_n$)
to the outputs $\signal_n$. This yields
the modified error functional
\begin{equation} \label{eq:err2}
    E_\alpha (\NN ) \coloneqq \frac{1}{2} \sum_{\ell=1}^L \norm{ \signal_n - (\Id_{\XX} + (\Id_{\XX} - \Bo_\al \Ao) \NN)  (\Bo_\al  \signal_n) }^2
    + \mu \prod_{\ell=1}^L \norm{\Lo_\ell} \,.
\end{equation}
Trying to minimize  $E_\alpha$ may be beneficial in the  case that many
singular values are small but do not vanish exactly.
The regularized version $\Bo_\alpha$ might be defined  by
truncated SVD or Tikhonov regularization.

\subsection{Convergence and convergence rates}

Let $\Id_{\XX} +   (\Id_{\XX} - \Ao^{\plus} \Ao) \NN$  be a null-space  network, possibly  trained as  described in
Section \ref{ssec:traing} by approximately minimizing \eqref{eq:err1}.
Any such network  belongs  to the class  of functions $\Id_{\XX} + \nun$ by taking $\nun =  (\Id_{\XX} - \Ao^{\plus} \Ao) \NN$.
Consequently, the convergence theory of Section \ref{sec:Mregularization}
applies. In particular,  Theorem \ref{thm:conv}
shows that  a regularization $(\Bo_\al)_{\al >0}$ of the
Moore-Penrose generalized inverse  defines a
$\M$-regularization  method via $\Ro_\al \coloneqq  (\Id_{\XX} +   (\Id_{\XX} - \Ao^{\plus} \Ao) \NN)  \Bo_\al$.
Additionally, Theorem  \ref{thm:rates} yields convergence rates for the regularization
$(\Ro _\al )_{\al >0}$ of $\Ao^\M$.

In some cases,  the projection $\Po_{\ker(\Ao)} =  \Id_{\XX} - \Ao^{\plus} \Ao$ might be costly to be
computed
exactly. For that purpose, in  this section we derive more general regularization methods 
that include approximate evaluations  of  $\Ao^{\plus} \Ao$.

\begin{theorem} \label{thm:convQ}
Let  $\nsn = \Id_{\XX} + (\Id_{\XX} - \Ao^{\plus} \Ao) \NN $ be a null space network
and set $\M \coloneqq \ran( \nsn )$.
Suppose $((\Bo_\al)_{\al >0}, \al^\star)$ is a
regularization method for  $\Ao \signal = \data$.
Moreover, let $(\Qo_\al)_{\al >0}$ be a family  of bounded
operators  on $\XX$ with $\snorm{\Qo_\al - \Po_{\ker(\Ao)}} \to 0$
as $\al \to 0$.
Then, the pair $((\Ro_\al)_{\al >0}, \al^\star)$ with
\begin{equation}
	\Ro_\al \coloneqq (\Id_{\XX} + \Qo_\al \NN )  \circ \Bo_\al
\end{equation}
is  a $\M$-regularization method  for $\Ao \signal = \data$.
In particular, the  family  $(\Ro_\al)_{\al >0}$ is a  regularization
of $\Ao^\M$.
\end{theorem}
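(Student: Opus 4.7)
The plan is to verify both conditions of Definition~\ref{def:Mreg} directly. The parameter-choice condition $\lim_{\delta \to 0} \sup\set{\al^\star(\delta, y^\delta) : \snorm{y^\delta - y} \leq \delta} = 0$ is inherited at no cost from the hypothesis that $((\Bo_\al)_{\al > 0}, \al^\star)$ is already a classical regularization method for $\Ao \signal = \data$. The entire task therefore reduces to proving the uniform convergence
\[ \sup\set{\snorm{\Ao^\M y - \Ro_{\al^\star(\delta, y^\delta)} y^\delta} : y^\delta \in \YY,\, \snorm{y^\delta - y} \leq \delta} \to 0 \quad \text{as } \delta \to 0, \]
for every $y \in \dom(\Ao^{\plus})$, where $\Ao^\M$ is interpreted with respect to $\nun \coloneqq (\Id_{\XX} - \Ao^{\plus}\Ao)\NN = \Po_{\ker(\Ao)}\NN$, since then $\M = \ran(\nsn)$ contains $\Ao^\M y = \nsn(\Ao^{\plus} y)$ and Theorem~\ref{thm:conv} cannot be applied verbatim only because $\Qo_\al$ replaces $\Po_{\ker(\Ao)}$.

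The first key step is an algebraic decomposition. Writing $\Ao^\M y = \Ao^{\plus} y + \Po_{\ker(\Ao)}\NN(\Ao^{\plus} y)$ via Definition~\ref{def:Mgen} and $\Ro_\al y^\delta = \Bo_\al y^\delta + \Qo_\al \NN(\Bo_\al y^\delta)$, I would insert the cross term $\pm \Po_{\ker(\Ao)}\NN(\Bo_\al y^\delta)$ to obtain the three-term split
\[ \Ao^\M y - \Ro_\al y^\delta = (\Ao^{\plus} y - \Bo_\al y^\delta) + \Po_{\ker(\Ao)}(\NN(\Ao^{\plus} y) - \NN(\Bo_\al y^\delta)) + (\Po_{\ker(\Ao)} - \Qo_\al)\NN(\Bo_\al y^\delta). \]
Using $\snorm{\Po_{\ker(\Ao)}} \leq 1$ together with a Lipschitz constant $L_\NN$ for $\NN$ (inherited from the layered architecture of Definition~\ref{def:net} with Lipschitz nonlinearities), the first two summands are controlled by $(1 + L_\NN)\snorm{\Ao^{\plus} y - \Bo_\al y^\delta}$, whose supremum over all admissible $y^\delta$ tends to $0$ as $\delta \to 0$ by the classical regularization hypothesis on $(\Bo_\al)_{\al > 0}$. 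For the third summand, the natural operator-norm bound gives $\snorm{(\Po_{\ker(\Ao)} - \Qo_\al)\NN(\Bo_\al y^\delta)} \leq \snorm{\Po_{\ker(\Ao)} - \Qo_\al} \cdot \snorm{\NN(\Bo_\al y^\delta)}$.

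The main obstacle is the third term, since it factors into the operator-norm difference (which must shrink uniformly in $y^\delta$) and the residual $\snorm{\NN(\Bo_{\al^\star(\delta, y^\delta)} y^\delta)}$ (which must stay uniformly bounded). The first factor vanishes uniformly by combining the hypothesis $\snorm{\Qo_\al - \Po_{\ker(\Ao)}} \to 0$ as $\al \to 0$ with the uniform vanishing of $\al^\star(\delta, y^\delta)$ built into the definition of a classical regularization. For the second factor, I would invoke the regularization property once more: the uniform convergence $\Bo_{\al^\star(\delta, y^\delta)} y^\delta \to \Ao^{\plus} y$ puts these iterates, for small $\delta$, inside a fixed bounded ball around $\Ao^{\plus} y$, and Lipschitz continuity of $\NN$ then gives a uniform bound on $\snorm{\NN(\Bo_{\al^\star(\delta, y^\delta)} y^\delta)}$. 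Combining the three estimates yields the desired supremum limit, establishing that $((\Ro_\al)_{\al > 0}, \al^\star)$ is a $\M$-regularization method; the closing statement that $(\Ro_\al)_{\al > 0}$ regularizes $\Ao^\M$ is then just the second half of Definition~\ref{def:Mreg}.
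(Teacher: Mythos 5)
Your proof is correct and follows essentially the same route as the paper: insert the cross term $\Po_{\ker(\Ao)}\NN(\Bo_\al \data^\delta)$, reduce the exact-projection part to Theorem~\ref{thm:conv} (which you re-derive inline via the Lipschitz bound), and estimate the remainder by $\snorm{\Qo_\al - \Po_{\ker(\Ao)}}\,\snorm{\NN(\Bo_\al \data^\delta)}$. The only difference is that you explicitly justify the uniform boundedness of $\snorm{\NN(\Bo_{\al^\star(\delta,\data^\delta)}\data^\delta)}$ and the uniform smallness of $\snorm{\Qo_{\al^\star(\delta,\data^\delta)} - \Po_{\ker(\Ao)}}$, two details the paper's one-line proof leaves implicit.
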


\begin{proof}
We have
\begin{multline} \label{eq:Q}
\norm{(\Id_{\XX} + \Qo_\al \NN  )  \circ \Bo_\al (\data^\delta) - \Ao^\M\data }
\\ \leq
\norm{(\Id_{\XX} + \Po_{\ker(\Ao)} \NN )  \circ \Bo_\al (\data^\delta) - \Ao^\M\data }
+  \snorm{\Qo_\al - \Po_{\ker(\Ao)}} \, \snorm{\NN \Bo_{\al} (\data^\delta)} \,.
\end{multline}
The claim  follows from Theorem \ref{thm:conv}.
\end{proof}

 \begin{theorem} \label{thm:ratesQ}
 Let  $\nsn = \Id_{\XX} + (\Id_{\XX} - \Ao^{\plus} \Ao) \NN $ be a null space network
 and set $\M \coloneqq \ran( \nsn )$.
Let $\mu >0$, suppose  $\kl{g_\alpha}_{\al >0}$
 satisfies  the assumptions of Theorem~\ref{thm:rates}, and
let $(\Qo_\al)_{\al >0}$ be a family  of bounded
operators  on $\XX$ with $\snorm{\Qo_\al - \Po_{\ker(\Ao)}}
=  \mathcal{O}(\delta^{\frac{2\mu}{2\mu+1}})$.
Consider the
regularization $(\Ro_\al)_{\al>0}$ with
\begin{equation}
	\Ro_\al \coloneqq (\Id_{\XX} + \Qo_\al \NN )  \circ g_\al(\Ao^*\Ao) \Ao^* \,.
\end{equation}
Then, the parameter choice
$\al^\star  \asymp ({\delta}/{\rho})^{\frac{2}{2\mu+1}}$
yields the convergence rate results 
$\snorm{\Ro_{\al^\star(\delta, y^\delta)} (y^\delta) - \signal } =
\mathcal{O}(\delta^{\frac{2\mu}{2\mu+1}})$
for any $\signal \in \ran( \nsn (\Ao^*\Ao)^\mu)$.
\end{theorem}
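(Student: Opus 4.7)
My plan is to recognize Theorem~\ref{thm:ratesQ} as a controlled perturbation of Theorem~\ref{thm:rates}. If one were to replace $\Qo_\al$ by the exact kernel projector $\Po_{\ker(\Ao)} = \Id_{\XX}-\Ao^{\plus}\Ao$, the regularization $\widetilde{\Ro}_\al \coloneqq (\Id_{\XX} + \Po_{\ker(\Ao)}\NN)\circ g_\al(\Ao^*\Ao)\Ao^*$ would be precisely the regularization treated by Theorem~\ref{thm:rates} with $\nun \coloneqq \Po_{\ker(\Ao)}\NN$, and since $\nsn = \Id_{\XX} + \Po_{\ker(\Ao)}\NN$, the set $\M_{\mu,\rho,\nun}$ appearing there coincides with $\nsn\kl{\Ao^*\Ao}^\mu(\overline{B_\rho(0)})$. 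Hence for any $\signal \in \ran(\nsn(\Ao^*\Ao)^\mu)$, choosing $\rho>0$ with $\signal = \nsn(\Ao^*\Ao)^\mu w$ for some $\snorm{w} \leq \rho$, Theorem~\ref{thm:rates} directly yields
\begin{equation*}
\snorm{\widetilde{\Ro}_{\al^\star(\delta,y^\delta)}(y^\delta) - \signal} = \mathcal{O}(\delta^{\frac{2\mu}{2\mu+1}}) \,.
\end{equation*}

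The next step is the error decomposition already isolated in the proof of Theorem~\ref{thm:convQ} (cf.\ \eqref{eq:Q}):
\begin{equation*}
\snorm{\Ro_{\al^\star}(y^\delta) - \signal}
\leq
\snorm{\widetilde{\Ro}_{\al^\star}(y^\delta) - \signal}
+ \snorm{\Qo_{\al^\star} - \Po_{\ker(\Ao)}} \cdot \snorm{\NN(g_{\al^\star}(\Ao^*\Ao)\Ao^* y^\delta)} \,.
\end{equation*}
The first term has already been handled. For the second term, the assumed rate $\snorm{\Qo_\al - \Po_{\ker(\Ao)}} = \mathcal{O}(\delta^{\frac{2\mu}{2\mu+1}})$ supplies the correct decay prefactor, so what remains is to check that the trailing factor $\snorm{\NN(g_{\al^\star}(\Ao^*\Ao)\Ao^* y^\delta)}$ is uniformly bounded as $\delta \to 0$.

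This uniform boundedness is where the mild technical work sits. Since $\NN$ is a composition of continuous affine maps and Lipschitz activations, it is Lipschitz with some constant $L_\NN$, giving $\snorm{\NN(z)} \leq \snorm{\NN(0)} + L_\NN \snorm{z}$. Under the parameter choice $\al^\star \asymp (\delta/\rho)^{2/(2\mu+1)}$, classical regularization theory (applied to $\Po_{\ran(\Ao^{\plus})}\widetilde{\Ro}_{\al^\star} = g_{\al^\star}(\Ao^*\Ao)\Ao^*$, which converges to $\Ao^{\plus} y$) ensures that $\snorm{g_{\al^\star}(\Ao^*\Ao)\Ao^* y^\delta}$ stays bounded (in fact converges) as $\delta \to 0$, so the trailing factor is $\mathcal{O}(1)$ and the second term is again $\mathcal{O}(\delta^{\frac{2\mu}{2\mu+1}})$.

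Combining the two estimates yields the claimed rate $\snorm{\Ro_{\al^\star(\delta,y^\delta)}(y^\delta) - \signal} = \mathcal{O}(\delta^{\frac{2\mu}{2\mu+1}})$. The main conceptual obstacle is already overcome by the decomposition: once $\Qo_\al$ is split from $\Po_{\ker(\Ao)}$, the problem reduces to the exact-projection case covered by Theorem~\ref{thm:rates} plus a perturbation whose decay is handed to us by hypothesis. The only nontrivial check is the uniform bound on $\snorm{\NN(g_{\al^\star}(\Ao^*\Ao)\Ao^* y^\delta)}$, which follows from Lipschitz continuity of $\NN$ and the convergence of the underlying filter-based regularization.
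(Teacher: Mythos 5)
Your proposal is correct and follows exactly the paper's (very terse) argument: the paper's proof consists of invoking the decomposition \eqref{eq:Q} with $\Bo_\al = g_\al(\Ao^*\Ao)\Ao^*$ together with Theorem~\ref{thm:rates}, which is precisely your splitting into the exact-projection case plus the perturbation term. You additionally spell out the uniform boundedness of $\snorm{\NN(g_{\al^\star}(\Ao^*\Ao)\Ao^* y^\delta)}$, a detail the paper leaves implicit, so your write-up is a faithful (and slightly more complete) version of the same proof.
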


\begin{proof}
Follows from the estimate
\eqref{eq:Q} with $\Bo_\al = g_\al(\Ao^*\Ao) \Ao^* $ and  Theorem \ref{thm:rates}.
\end{proof}

One might use   $\Qo_\al = \Bo_{\phi(\al)}  \Ao $ as a possible
approximation to $\Po_{\ker(\Ao)^\bot} = \Ao^{\plus} \Ao$ for  some  function 
$\phi \colon [0, \infty) \to [0, \infty) $. In such a situation, one can use 
existing software packages  (for example, for the filtered  backprojection  algorithm 
and  the discrete Radon  transform  in case of computed tomography)  
for evaluating   $\Bo_{\phi(\al)}$ and $  \Ao$.

\section{Conclusion}
\label{sec:conclusion}

In this paper, we introduced the concept of
null space networks that have the form $\nsn= \Id_{\XX} +  (\Id_{\XX} - \Ao^{\plus} \Ao) \NN  $,
where $\nun$ is  any neural network function
(for example a  deep convolutional neural network) and $\Id_{\XX} - \Ao^{\plus} \Ao = \Po_{\ker(\Ao)}$ is the
projector onto the kernel of the forward operator $\Ao \colon \XX \to \YY$
of the inverse problem to be solved.
The  null space network
shares similarity with a residual network  that takes the general form
$\Id_{\XX} +  \NN$. However, the introduced projector $\Id_{\XX} - \Ao^{\plus} \Ao$
guarantees data consistency  which is an important issue when
solving inverse problems.

The null space networks are special members of the class
of functions $\Id_{\XX} + \nun$  that satisfy  $\ran(\nun) \subseteq \ker (\Ao)$.
For this class, we introduced the concept of $\M$-generalized
inverse $\Ao^\M$ and $\M$-regularization as point-wise approximations
of $\Ao^\M$ on $\dom(\Ao^{\plus})$. We showed that any classical
regularization $(\Bo_\al)_{\al >0}$ of the Moore-Penrose generalized inverse
defines a  $\M$-regularization method via $(\Id_{\XX} + \nun)  \Bo_\al$.
In the case of null space networks where  $\nun = (\Id_{\XX} - \Ao^{\plus} \Ao) \NN$,
we  additionally derived  convergence results  using only  approximation of the
projection operator  $\Po_{\ker(\Ao)}$. Additionally, we  derived convergence rates using
either exact or approximate projections.

To the best of our knowledge, the obtained
convergence and convergence rates are the first regularization results
for solving inverse problems with neural networks. Future work has to be
done to numerically test the null space networks  for typical
inverse problems  such as limited data problems in CT or deconvolution
and compare the performance with standard  residual networks, iterative
networks or variational networks.

\section*{Acknowledgement}

The work of M.H and S.A. has been supported by the Austrian Science Fund (FWF),
project P 30747-N32.

\end{document}